\newtheorem{theorem}{Theorem}[section]
\newtheorem{corollary}[theorem]{Corollary}
\newenvironment{proof}[1][Proof]{\begin{trivlist}
\item[\hskip \labelsep {\bfseries #1}]}{\end{trivlist}}
\newcommand{\qed}{\nobreak \ifvmode \relax \else
      \ifdim\lastskip<1.5em \hskip-\lastskip
      \hskip1.5em plus0em minus0.5em \fi \nobreak
      \vrule height0.75em width0.5em depth0.25em\fi}
\numberwithin{equation}{section}
\title{\bf\large
Characterization of Compact Subsets of $\mathcal{A}^p$ with Respect to Weak Topology}
\author{Hirbod Assa\thanks{{\tt e-mail: assa@dms.umontreal.ca}\,\,,\,\,\,Department of Mathematics and Statistics , University of Montreal,CP 6128, succ. Centre-ville 
Montr\'eal, Qu\'ebec H3C 3J7 
Canada}\\
\small Department of Mathematics and Statistics\\
\small University of Montreal}
\date{}
\begin{document}

\maketitle

\abstract{In this brief article we characterize the relatively compact subsets of $\mathcal{A}^p$ for the topology  $\sigma(\mathcal{A}^p,\mathcal{R}^q)$ (see below), by the weak compact subsets of $L^p$ . The spaces $\mathcal{R}^q$ endowed with the weak topology induced by $\mathcal{A}^p$, was recently employed to create the convex risk theory of random processes. The weak compact sets of $\mathcal{A}^p$ are important to characterize the so-called Lebesgue property of convex risk measures, to give a complete description of the Makcey topology on $\mathcal{R}^q$ and for their use in the optimization theory.}

\section{Introduction}
Let $(\Omega,\mathcal{F},\mathbb{P})$ be a standard topological space and let $(\mathcal{F}_t)_{0\leq t\leq T}$ be a filtration with the usual conditions.
\\The set of c\`adl\`ag processes is a big family of random processes  which for example contains the space of martingales, submartingales and supermartingales. This set is endowed with a so-called Skorokhod metric for which the induced topology is complete and separable. Furthermore the tight sequences and the relation between this topology and weak convergence (in the probabilistic context) are very well known \cite{Ja}. Although these are nice properties, the big pitfall associated with this topology is that the operator of finite summation is not continuous. This fact indicates that this space can not be considered as a topological vector space. Having the topological vector space structure is important not only to take the advantage of the theory of topological vector spaces, but also to define the convex functions and to study the smoothness of the convex functions with respect to the topology of the space \cite{Au} ,\cite{Rc}.
\\On the other hand there are subsets of the set of c\`adl\`ag processes including all continuously differentiable processes, Brownian motion and Levy processes and also have very nice topological vector space structures \cite{DM2},\cite{Ja},\cite{Co} and \cite{Ka}. As an  example:
\begin{equation}
 \mathcal{H}^1=\left \{H:[0,T]\times\Omega\longrightarrow \mathbb{R}\Bigg |
\begin{array}{clcr}
& H \,\,\text{is c\`adl\`ag} \\
& H \,\,\text{adapted and martingale}\\
& (H)^*\in L^1
\end{array}
 \right\} ,
\end{equation}
where $(H)^*(\omega)=\sup\limits_{0\leq t\leq T}\vert H_t(\omega)\vert$. The space $\mathcal{H}^1$ is a Banach space with the norm $\Vert H\Vert=\Vert(H)^*\Vert_{L^1}.$ This space as a Banach space and its dual which is BMO have been studied in many earlier works(for example see \cite{DM2}). The compact subsets of this space with respect to weak topology $\sigma(\mathcal{H}^1,BMO)$ have been characterized in \cite{DMY}.
The general subject is the weak topology $\sigma(\mathcal{H}^1,BMO)$ on the space $\mathcal{H}^1$. Its relatively compact sets are characterized by a uniform integrability property of the maximal functions. 
\\In our work we aim to characterize the compact sets of a much bigger family of random processes $\mathcal{R}^q$ which contained $\mathcal{H}^1$. The space $\mathcal{R}^q$ is defined as follows:
\begin{equation} 
 \mathcal{R}^q=\left \{X:[0,T]\times\Omega\longrightarrow \mathbb{R}\Bigg |
\begin{array}{clcr}
& X \,\,\text{is c\`adl\`ag} \\
& X \,\,\text{adapted}\\
& (X)^*\in L^q
\end{array}
 \right\} ,
\end{equation} 
 This space is a Banach space with the norm $\Vert X\Vert_{\mathcal{R}^q}=\Vert(X)^*\Vert_{L^q}.$
\\Among the subsets of c\`adl\`ag processes, the spaces $\mathcal{R}^q$ for $q\in[1,+\infty]$  are not too far from the set of c\`adl\`ag processes and they also have nice topological vector space structures. Recently the authors of \cite{Ch} and \cite{Ch2} have employed the family $\mathcal{R}^q$  to develop the theory of convex risk measures for the weak topology induced by $\mathcal{A}^p$ (see below). By $\mathcal{A}^p$ we mean:
\begin{equation}
\left\{a:[0,T]\times\Omega\longrightarrow \mathbb{R}^2\Bigg |
\begin{array}{clcr}
&a=(a^{\text{pr}},a^{\text{op}}),\\ 
&a^{\text{pr}},a^{\text{op}} \,\text{right continuous of finite  variation}\\
&a^{\text{pr}} \text{predictable},a^{\text{pr}}_0=0\\
&a^{\text{op}}\text{optional , purely\,discontinuous}\\
&\text{Var}(a^{\text{pr}})+\text{Var}(a^{\text{op}})\in L^p
\end{array}
 \right\} \;, 
\end{equation}
where $\text{Var}(f)$ is the variation of function $f$. This space is a Banach space with the norm $\Vert a\Vert_{\mathcal{A}^p}=\Vert \text{Var(a)}\Vert_{L^p}$. For $\frac{1}{p}+\frac{1}{q}=1$ the dual relation between $\mathcal{A}^p$ and $\mathcal{R}^q$ is as follows:
\begin{equation}
\langle X,a\rangle=E[(X|a)],
\end{equation}
where $(X|a)=\int\limits_{]0,T]}X_{t-}da_t^{\text{pr}}+\int\limits_{[0,T]} X_tda_t^{\text{op}}.$ 
\\The compact sets of the space $\mathcal{A}^p$ are important for several reasons. In risk theory they can characterize the convex risk measures with the Lebesgue property(see \cite{As}, \cite{Jo}). In topological vector spaces theory, they can give a complete description of Makcey's topology on $\mathcal{R}^q$ (see \cite{Gr}) and in optimization theory they can be used to optimize the lower semi continuous convex functions  (see \cite{Au}, \cite{Rc}).
 \\In this paper we study the compact subset of $\mathcal{A}^p$ for the topology $\sigma(\mathcal{A}^p,\mathcal{R}^q)$ where $\frac{1}{p}+\frac{1}{q}=1$. We characterize the compactness in $\mathcal{A}^p$ by the compactness (and in the case $p=1$ by the uniform integrability) of the variation functions in $L^p$. 
\\The paper is organized as follows: in Section 2 we give some definitions and remarks for which we need to prove our main result. In Section 3 we characterize the compact sets of $\mathcal{A}^p$ by the compact sets of $L^p$ for the weak topology.

\section{Some definitions and Remarks}
In this section we give some definitions that are needed in Section 3.

 Consider $\hat {\mathcal{F}_t}=\mathcal{F}$ and $(\hat {\mathcal{R}}^q,\hat {\mathcal{A}}^p)$, the corresponding process spaces. Let $\Pi^{\text{op}}$ and $\Pi^{\text{pr}}$ be the optional and predictable projections. We also give the dual optional and predictable projection of finite variation processes with the same notation (as references see \cite{Ch},\cite{DM2} and \cite{Ka}). We define the projection $\Pi^*:\hat{\mathcal{A}}^p\rightarrow {\mathcal{A}^p}$ as follows : let $a=(a^l,a^r)\in\hat{\mathcal{A}}^p$. Let $\tilde a^l=\Pi^{\text{pr}}(a^l)$ and $\tilde a^r=\Pi^{\text{op}}(a^r)$. Then one can split $\tilde a^r$ uniquely into a purely discontinuous finite variation part $\tilde a^r_d$ and a continuous finite variation part $\tilde a^r_c$ with $\tilde a^r_c(0)=0$. Now define:
$$\Pi^*(a)=(\tilde a^l+\tilde a^r_c,\tilde a^r_d).$$
 We know that every predictable process is also optional so $\tilde a^l,\tilde a^r_c,\tilde a^r_d$  are all optional. This fact by  definition of $\Pi^*$ give that for every $X\in\mathcal{R}^q$ we have:
\begin{equation}
\label{3.0}
\langle X,a\rangle=\langle X,\Pi^*(a)\rangle.
\end{equation}
For more details see  relation 3.5 , Remark 3.6 \cite{Ch}.
\\Let $a=(a^{\text{pr}},a^{\text{op}})\in{\mathcal{A}}^p$. Since any predictable process is optional then by Theorem $2.1.53$ \cite{Ka} the measure $\mu(A)=\langle 1_A,a\rangle$ is optional and then $\langle X,a\rangle=\langle \Pi^{\text{op}}(X),a\rangle.$ This relation with \ref{3.0} yields that $\forall X\in\hat{\mathcal{R}}^q,a\in\hat{\mathcal{A}^p}$:
\begin{equation}
\label{3.0.1}
\langle \Pi^{\text{op}}(X),a\rangle=\langle \Pi^{\text{op}}(X),\Pi^*(a)\rangle=\langle X,\Pi^*(a)\rangle.
\end{equation}  
For more details the reader is referred to \cite{Ch}, Remarks 2.1 , 3.6 and \cite{Ka}.

For every random variable $X\in L^q(\Omega,\mathcal{F})$ , we identify the random variable $X$ and the random process $X_t:=X\,\,\,,\,\,\forall t\in[0,T]$.
\\\textbf{Remark}{\bf 2.1}: Relation \ref{3.0} (or \ref{3.0.1}) shows that $\Pi^*$ is $\sigma(\hat{\mathcal{A}}^p,\hat{\mathcal{R}}^q)$/$\sigma({\mathcal{A}}^p,{\mathcal{R}}^q)$ is continuous.
\\\textbf{Remark}{\bf 2.2}: Let $X\in L^q(\Omega)$ be a random variable. By Doob's Stopping Theorem
 it is easy to see that the optional projection of constant random process $X$ is the martingale $M_t:=E[X|\mathcal{F}_t]$.
 So then $\forall X\in L^q, a\in\hat{\mathcal{A}}^p$:
\begin{equation}
\label{3.2}
E[(a_T-a_0)X]=\langle X,a\rangle=\langle \Pi^{\text{op}}(X),a\rangle\,,\,\,
\end{equation}
\textbf{Remark}{\bf {2.3}}: For any uniformly integrable (or weakly relatively compact ) subset $A$ of $L^p$ and a bounded sequence $X_n\in L^q$ converging in probability to $X\in L^q$ we have:
\begin{equation}\label{2.5}
 \sup\limits_{f\in A}E[fX_n]\rightarrow\sup\limits_{f\in A}E[fX]
\end{equation}
For that see \cite{Jo}.
\\\textbf{Remark}{\bf {2.4}}: Any member $a\in\mathcal{A}^p$ can be written as the difference of two increasing parts $a^+-a^-$ for which almost surely $a^+$ and $a^-$ as measures on $[0,T]$ have disjoint supports.

\section{Characterization of the Compact Sets of $\mathcal{A}^p$}

In this section we give the characterization of the compact sets of $\mathcal{A}^p$ with respect to the compact sets of $L^p$. By r.c. we mean relatively compact.
\begin{theorem}
Let $A\subset\mathcal{A}^p$ and $\frac{1}{p}+\frac{1}{q}=1$. The following three conditions are equivalent:
\\1-$A$ is r.c. in the topology $\sigma(\mathcal{A}^p,\mathcal{R}^q)$. 
\\2-$\text{Var}(A)$ is r.c. in the topology $\sigma(L^p,L^q)$.
\\3-$C:=\{a_T-a_0|a\in A\}$ is r.c. in the topology $\sigma(L^p,L^q)$.
\end{theorem}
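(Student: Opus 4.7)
My plan is to close the cycle (2) $\Rightarrow$ (1) $\Rightarrow$ (3) $\Rightarrow$ (2). Two of these are close to formal and follow from the duality structure $(\mathcal{A}^p,\mathcal{R}^q)$ and the identity in Remark 2.2; the real work is concentrated in (3) $\Rightarrow$ (2).

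I will begin with (2) $\Rightarrow$ (1). Any weakly relatively compact subset of a Banach space is norm-bounded (uniform boundedness). Because $\|a\|_{\mathcal{A}^p}=\|\text{Var}(a)\|_{L^p}$, hypothesis (2) therefore forces $A$ to be norm-bounded in $\mathcal{A}^p$. Since $\mathcal{A}^p$ is the topological dual of $\mathcal{R}^q$, Banach--Alaoglu then yields that $A$ is relatively compact in the weak-$*$ topology $\sigma(\mathcal{A}^p,\mathcal{R}^q)$.

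For (1) $\Rightarrow$ (3) I will use the identification of $L^q$ with the constant processes in $\mathcal{R}^q$. Define $\phi:\mathcal{A}^p\to L^p$ by $\phi(a):=a_T-a_0$. Remark 2.2 reads $E[\phi(a)X]=\langle X,a\rangle$ for every $X\in L^q$, which is exactly the statement that $\phi$ is continuous from $\sigma(\mathcal{A}^p,\mathcal{R}^q)$ to $\sigma(L^p,L^q)$. Continuous images of relatively compact sets are relatively compact, so $C=\phi(A)$ satisfies (3).

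The real content is (3) $\Rightarrow$ (2). Using Remark 2.4 I will write each $a\in A$ as $a=a^+-a^-$ with $a^+,a^-$ increasing of disjoint support, so that $\text{Var}(a)=(a^+_T-a^+_0)+(a^-_T-a^-_0)$ while $a_T-a_0=(a^+_T-a^+_0)-(a^-_T-a^-_0)$. On the cone of increasing processes the variation coincides with the endpoint increment, so it suffices to show that the two sets $A^{\pm}:=\{a^\pm_T-a^\pm_0:a\in A\}$ are each $\sigma(L^p,L^q)$-relatively compact. To reach the two parts separately through the given pairing, for each $a$ I would construct a sign process $X_a\in\mathcal{R}^q$ with $\|X_a\|_{\mathcal{R}^q}\le 1$, equal to $+1$ on the support of $a^+$ and $-1$ on the support of $a^-$, so that $\langle X_a,a\rangle=\text{Var}(a)$; together with Remark 2.3 this should transfer weak compactness from $C$ to $\text{Var}(A)$. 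The main obstacle lies here: the Hahn-type map $a\mapsto(a^+,a^-)$ is neither linear nor continuous for $\sigma(\mathcal{A}^p,\mathcal{R}^q)$, so the $X_a$ depend nontrivially on $a$, and the technical heart of the argument is producing uniform control on the family $\{X_a\}_{a\in A}$---presumably by working in the enlarged space $\hat{\mathcal{A}}^p$ and using the projections $\Pi^{\text{op}},\Pi^{\text{pr}},\Pi^*$ together with the factorisation (2.6)---so that the sign-extraction can be pushed through weak limits.
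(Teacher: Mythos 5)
There is a genuine gap, and it sits exactly where your plan leans on soft functional analysis: the step (2) $\Rightarrow$ (1) via Banach--Alaoglu. That argument only uses norm-boundedness of $A$, and it requires $\mathcal{A}^p$ to be the topological dual of $\mathcal{R}^q$. This is true (by Dellacherie--Meyer, and after handling adaptedness via the projection $\Pi^*$) precisely when $p\neq 1$, and indeed the paper disposes of the case $p\neq1$ in a few lines this way. But for $p=1$ the dual of $\mathcal{R}^\infty$ is strictly larger than $\mathcal{A}^1$ (the situation is the analogue of $(L^\infty)^*\supsetneq L^1$), so $\sigma(\mathcal{A}^1,\mathcal{R}^\infty)$ is not a weak-$*$ topology and bounded sets are not relatively compact for it. If your argument were valid it would make (1) equivalent to mere boundedness of $\mathrm{Var}(A)$, contradicting the theorem itself, which for $p=1$ forces uniform integrability of $\mathrm{Var}(A)$ (Dunford--Pettis); your plan discards exactly that extra information. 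This missing implication is the real content of the theorem, and the paper spends the bulk of the proof on it: it introduces an auxiliary locally convex topology $\sigma^1$ on $\mathcal{R}^\infty$ generated by the seminorms $P_H(X)=\sup_{a\in V(H)}\langle X,a\rangle$ with $H$ weakly compact in $L^1$, shows using Dellacherie--Meyer (Chapter VII, Theorem 2 and (5.1)) that every $\sigma^1$-continuous functional lies in $\mathcal{A}^1$, invokes Mackey's theorem to get $\sigma^1\subseteq\tau(\mathcal{R}^\infty,\mathcal{A}^1)$, and then concludes by polarity and Alaoglu--Bourbaki that $V(\mathrm{Var}(A))\supseteq A$ is $\sigma(\mathcal{A}^1,\mathcal{R}^\infty)$-relatively compact. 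Nothing in your proposal supplies a substitute for this mechanism.

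Two further remarks. First, you locate the ``real work'' in (3) $\Rightarrow$ (2), but with the disjoint-support decomposition of Remark 2.4 one has $\mathrm{Var}(a)=|a_T-a_0|$ almost surely, so passing between $C$ and $\mathrm{Var}(A)$ is a purely $L^p$ matter: for $p\neq1$ relative weak compactness is boundedness, and for $p=1$ it is uniform integrability, which is preserved under taking absolute values, positive/negative parts and domination. Your proposed construction of sign processes $X_a\in\mathcal{R}^q$ with $\langle X_a,a\rangle=\mathrm{Var}(a)$, and the uniform control of the family $\{X_a\}$, is unnecessary for this step --- and even if carried out it would not repair the $p=1$ hole in (2) $\Rightarrow$ (1). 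Second, your (1) $\Rightarrow$ (3) via Remark 2.2 is correct and is essentially the paper's argument (there the net-subnet version appears in the $p=1$ forward direction), and your (2) $\Rightarrow$ (1) is fine for $p\neq1$ provided you add the reduction to the constant filtration and the continuity of $\Pi^*$, as the paper does.
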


\begin{proof}(2)$\Leftrightarrow $(3). First of all we mention that a subset of $L^p$ is r.c. iff its absolute value is r.c. Actually for $p\neq 1$ this comes from the fact that bounded sets are r.c. sets and for $p=1$, by Dunford-Pettis theorem, uniformly integrable sets are r.c. sets. Let $A_\pm =\{a^\pm |a\in A\}$ where $a^+,a^-$ are increasing decomposition of $a$. It is obvious that $C_\pm =\{(a_T-a_0)^\pm | a\in A\}=\text{Var}(A_\pm )$. So we have $\text{Var}(A)\subseteq2\vert C\vert$ and $C\subseteq2\vert\text{Var}(A)\vert$. Now by above arguments the proof is complete.
\\ 
\\(1)$\Leftrightarrow $(2). We split this part into two cases. 
\\\textbf{Case 1: $p\neq 1$}.
\\Consider $\mathcal{F}_t=\mathcal{F}$. In this case by Theorems 65,67 of Section VII \cite{DM2} we know that $\mathcal{A}^p$ is the dual of $\mathcal{R}^q$. Since $\mathcal{A}^p$ is endowed with the weak* topology then $A$ is r.c. iff it is bounded and this is true iff $\text{Var}(A)$ is bounded or, in the other words, r.c. for topology $\sigma(L^p,L^q)$.
\\When $\mathcal{F}_t$ is nontrivial $A$ is a relatively compact set of $\hat{\mathcal{A}}^p$. The assertion is true because of the continuity of $\Pi^*$.
\\\textbf{Case 2: $p=1$}.
\\($\Rightarrow $):  We claim that $C_\pm $ are relatively compact. Let $a_T^\lambda-a_0^\lambda$ be a net in $C$ and $X$ be a member of $L^\infty$ . Then by the relative compactness of $A$ there is a subnet $a^\beta$ and $a$ such that $a_\beta\xrightarrow{\sigma(\mathcal{A}^1,\mathcal{R}^\infty)} a$. This gives:
 $$E[(a_T^\beta-a_0^\beta)X]=\langle \Pi^{\text{op}}(X),a^\beta\rangle\rightarrow \langle\Pi^{\text{op}}(X),a\rangle=E[(a_T-a_0)X].$$
That means $C$ is r.c. for $\sigma(L^1,L^\infty)$. By Dunford-Pettis theorem we know that this is equivalent to saying that $C$ is uniformly integrable. Then $\vert C\vert=\{\vert f\vert|f\in C\}$ is uniformly integrable and consequently $C_\pm $ are uniformly integrable. Again by Dunford-Pettis $C_\pm $ are r.c.
\\Now by $\text{Var}(A)\subseteq\text{Var}(A_+)+\text{Var}(A_-)=C_++C_-$ we get that $\text{Var}(A)$ is r.c.  
\\($\Leftarrow $):We define a topology on $\mathcal{R}^\infty$. For that we define the semi norms which generate this topology.
\\For any weakly relatively compact subset $H$ in $L^1$ let $V(H):=\{a\in\mathcal{A}^1|\exists f\in H\,\,\text{s.t.}\text{Var}(a)\leq\vert f\vert\}$. Now define the following semi norm for $H$ on $\mathcal{R}^\infty$:
$$P_H(X)=\sup\limits_{a\in V(H)}\langle X,a\rangle.$$
This topology is compatible with the vector structure because obviously the $V(H)$'s are bounded. We show this topology by $\sigma^1$. Let $(\mathcal{R}^\infty)^{'}$ be the dual of $\mathcal{R}^\infty$ with respect to topology $\sigma^1$. It is clear that $\mathcal{A}^1\subseteq (\mathcal{R}^\infty)^{'}$. We want to show that $\mathcal{A}^1= (\mathcal{R}^\infty)^{'}$. 
\\Let $\mu$ be an arbitrarily element of $(\mathcal{R}^\infty)'$ and $X_n$ be a non-negative sequence such that $(X_n)^*\xrightarrow{\mathbb{P}}0 $. Then by relation \ref{2.5} we have :
\begin{equation}
\label{new}
0\leq P_H(X_n)\leq\sup\limits_{f\in H} E[(X_n)^*\vert f\vert]\rightarrow 0.
\end{equation} 
This gives  $X_n\xrightarrow{\sigma^1} 0$  and then $\mu(X_n)\rightarrow 0$. This fact,and (5.1) of Chapter VII \cite{DM2} show that any $\mu$ can be decomposed into a difference of two positive functionals. Let $\mu^+$ be the positive part. By definition for any $X\geq0$ , $\mu^+(X)=\sup\limits_{0\leq Y\leq X}\mu(Y)$. Let $X_n$ be a positive and decreasing sequence for which $(X_n)^*\downarrow 0$ in probability. Let $0\leq Y_n\leq X_n$ be such that $\mu^+(X_n)\leq \mu(Y_n)+\frac{1}{n}$. Then since $(Y_n)^*\xrightarrow{\mathbb{P}}0 $ by \ref{new} we get: 
$$0\leq\mu^+(X_n)\leq\mu(Y_n)+\frac{1}{n}\rightarrow 0.$$
\\By this fact and Theorem 2 of Chapter VII \cite{DM2} we get that $\mu^+\in\mathcal{A}^1$. Similarly $\mu^-\in\mathcal{A}^1$ so then $\mu\in\mathcal{A}^1$.That means $\mathcal{A}^1=(\mathcal{R}^\infty)'$.
\\The Corollary to Mackey's Theorem 9, Section 13, Chapter 2 \cite{Gr} leads us to $\sigma^1\subseteq\tau(\mathcal{R}^\infty,\mathcal{A}^1)$, where $\tau(\mathcal{R}^\infty,\mathcal{A}^1)$ is the Mackey's topology. By this relation we get that for a relatively weakly compact subset $H$ of $L^1$ there exists $C$, a compact disk in $(\mathcal{A}^1,\sigma(\mathcal{A}^1,\mathcal{R}^\infty))$, for which $\{X| \sup\limits_{a\in C} \langle X,a\rangle< 1\}\subset\{X|P_H(X)\leq1\}$. By polarity $V(H)\subseteq\{X|P_H(X)\leq1\}^\circ\subseteq \{X| \sup\limits_{a\in C} \langle X,a\rangle< 1\}^\circ $. Using the generalized Bourbaki-Alaoglu Theorem we get that $\{X| \sup\limits_{a\in C} \langle X,a\rangle< 1\}^\circ$ is compact in the topology $\sigma(\mathcal{A}^1,\mathcal{R}^\infty)$.
\\Let $H=\text{Var}(A)$. By $A\subseteq V(\text{Var}(A))$,  the proof is complete.  
\\
$\square$
\end{proof}
The following two corollaries are immediate:
\begin{corollary}
Let $A\subseteq\mathcal{A}^1$. Then $A$ is $\sigma(\mathcal{A}^1,\mathcal{R}^\infty)-$ r.c. iff $\text{Var}(A)$ is uniformly integrable.
\end{corollary}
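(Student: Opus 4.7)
The plan is to simply combine the equivalence (1)$\Leftrightarrow$(2) of Theorem 3.1 with a classical characterization of weak relative compactness in $L^1$. Specialized to $p=1$ and $q=\infty$, Theorem 3.1 already asserts that $A\subseteq\mathcal{A}^1$ is $\sigma(\mathcal{A}^1,\mathcal{R}^\infty)$-relatively compact if and only if $\mathrm{Var}(A)\subseteq L^1$ is $\sigma(L^1,L^\infty)$-relatively compact. So the only thing left to do is translate the latter condition into uniform integrability.

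For that translation I would invoke the Dunford--Pettis theorem, which states that a subset of $L^1(\Omega,\mathcal{F},\mathbb{P})$ is relatively compact for the weak topology $\sigma(L^1,L^\infty)$ precisely when it is uniformly integrable. This is the same theorem that was already used inside the proof of Theorem 3.1 (Case 2), so invoking it here is consistent with the paper's conventions and requires no further justification.

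Chaining the two biconditionals, $A$ is $\sigma(\mathcal{A}^1,\mathcal{R}^\infty)$-relatively compact $\iff$ $\mathrm{Var}(A)$ is $\sigma(L^1,L^\infty)$-relatively compact $\iff$ $\mathrm{Var}(A)$ is uniformly integrable, yields the corollary. There is no real obstacle: both ingredients are already in place, and the corollary is a one-line consequence. The only minor care needed is to note that $\mathrm{Var}(A)$ indeed lies in $L^1$ by the very definition of $\mathcal{A}^1$, so that Dunford--Pettis applies to it.
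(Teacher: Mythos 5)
Your proof is correct and is exactly the argument the paper intends: the corollary is stated as an immediate consequence of Theorem 3.1 with $p=1$, $q=\infty$, combined with the Dunford--Pettis identification of $\sigma(L^1,L^\infty)$-relative compactness with uniform integrability. Nothing further is needed.
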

\begin{corollary}
The set $A\subseteq\mathcal{A}^p$, for $p\neq\infty$, is $\sigma(\mathcal{A}^p,\mathcal{R}^q)-$ r.c. iff it is sequentially r.c. 
\end{corollary}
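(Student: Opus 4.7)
My plan is to reduce everything, via Theorem 3.1, to the classical Eberlein--\v{S}mulian theorem in the Banach space $L^p$. Theorem 3.1 identifies $A \subseteq \mathcal{A}^p$ being $\sigma(\mathcal{A}^p,\mathcal{R}^q)$-r.c.\ with $\text{Var}(A) \subseteq L^p$ being $\sigma(L^p,L^q)$-r.c. Since $L^p$ is a Banach space, Eberlein--\v{S}mulian yields that weak relative compactness in $L^p$ coincides with weak sequential relative compactness; hence $A$ is r.c.\ in $\mathcal{A}^p$ iff $\text{Var}(A)$ is weakly sequentially r.c.\ in $L^p$. It then remains to transfer sequential compactness between $\mathcal{A}^p$ and $L^p$.

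For the direction \emph{``$A$ sequentially r.c. implies $\text{Var}(A)$ weakly sequentially r.c.\ in $L^p$''}, take a sequence $(\text{Var}(a_n))$ in $\text{Var}(A)$; by hypothesis, extract $a_{n_k} \to a$ in $\sigma(\mathcal{A}^p,\mathcal{R}^q)$; apply Theorem 3.1 to the countable, r.c.\ set $\{a_{n_k}\} \cup \{a\}$ to get its variations weakly r.c.\ in $L^p$, and invoke Eberlein--\v{S}mulian once more to obtain a weakly convergent sub-subsequence of $(\text{Var}(a_n))$. Conversely, given $(a_n) \subseteq A$, the assumed weak sequential r.c.\ of $\text{Var}(A)$ yields $\text{Var}(a_{n_k}) \to g$ weakly in $L^p$, whence $\{\text{Var}(a_{n_k})\}$ is weakly r.c.\ and, by Theorem 3.1, $\{a_{n_k}\}$ is r.c.\ in $\mathcal{A}^p$; from here one must still draw a weakly convergent subsequence out of $(a_{n_k})$.

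The main obstacle is precisely this last extraction, which amounts to establishing that $\sigma(\mathcal{A}^p,\mathcal{R}^q)$ is angelic on its relatively compact sets. For $p \in (1,\infty)$, the identification $\mathcal{A}^p = (\mathcal{R}^q)^*$ recorded in Case 1 of the proof of Theorem 3.1, combined with the separability of $\mathcal{R}^q$ under the standing ``standard space'' hypothesis on $(\Omega,\mathcal{F},\mathbb{P})$, makes the weak$^*$ topology metrizable on norm-bounded subsets of $\mathcal{A}^p$; since relatively compact sets are norm-bounded (via the bound $\|\text{Var}(A)\|_{L^p} < \infty$), ordinary and sequential relative compactness coincide. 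For $p = 1$, one uses instead the identification $\mathcal{A}^1 = (\mathcal{R}^\infty)'$ established within the proof of Theorem 3.1, together with the uniform integrability characterization from Corollary 3.2 and Eberlein--\v{S}mulian in $L^1$ applied to the UI set $\text{Var}(\{a_{n_k}\})$, to refine $(a_{n_k})$ into a weakly convergent subsequence in $\mathcal{A}^1$. With this, the claimed equivalence of relative and sequential relative compactness for $p \neq \infty$ follows as an immediate consequence of Theorem 3.1.
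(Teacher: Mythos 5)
Your overall reduction (Theorem 3.1 plus Eberlein--\v{S}mulian in $L^p$) is the route the paper itself gestures at when it calls the corollary ``immediate'', and your easy direction is fine: from a sequence $\mathrm{Var}(a_n)$ you extract $a_{n_k}\to a$, note that $\{a_{n_k}\}\cup\{a\}$ is compact, apply Theorem 3.1 and Eberlein--\v{S}mulian to get a weakly convergent sub-subsequence of the variations, hence $\mathrm{Var}(A)$ is weakly r.c.\ and, again by Theorem 3.1, $A$ is r.c. The problem is the hard direction, which you correctly isolate (extracting a $\sigma(\mathcal{A}^p,\mathcal{R}^q)$-convergent subsequence from a relatively compact set) but do not actually close. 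For $p\in(1,\infty)$ your argument rests on the separability of $\mathcal{R}^q$, and this is false: the deterministic processes $X^{(s)}_t=1_{[s,T]}(t)$, $0<s<T$, are c\`adl\`ag and adapted, and for $s\neq s'$ one has $\bigl(X^{(s)}-X^{(s')}\bigr)^*\equiv 1$, so $\Vert X^{(s)}-X^{(s')}\Vert_{\mathcal{R}^q}=1$; this uncountable $1$-separated family shows $\mathcal{R}^q$ is nonseparable regardless of how nice $(\Omega,\mathcal{F},\mathbb{P})$ is. Hence the weak$^*$ topology on norm-bounded subsets of $\mathcal{A}^p=(\mathcal{R}^q)^*$ is not metrizable by that argument, and one cannot simply invoke it; recall also that weak$^*$ compactness does not imply weak$^*$ sequential compactness in general (the unit ball of $\ell^\infty=(\ell^1)^*$), so some genuinely new input is required here, not just the identification of $\mathcal{A}^p$ as a dual space.

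For $p=1$ the gap is even more visible: knowing that $\mathrm{Var}(\{a_{n_k}\})$ is uniformly integrable and that $\mathrm{Var}(a_{n_k})\to g$ weakly in $L^1$ controls only the total variations, not the functionals $X\mapsto\langle X,a_{n_k}\rangle$ for general $X\in\mathcal{R}^\infty$; you state that one can ``refine $(a_{n_k})$ into a weakly convergent subsequence in $\mathcal{A}^1$'' but give no mechanism for producing, for a single subsequence, convergence of $\langle X,a_{n_k}\rangle$ simultaneously for all $X$ in the nonseparable space $\mathcal{R}^\infty$, nor for identifying the limit functional as an element of $\mathcal{A}^1$. So as written the proposal proves ``sequentially r.c.\ $\Rightarrow$ r.c.'' but not the converse; to repair it you would need an actual angelicity/diagonal argument adapted to this dual pair (for instance working with the measures induced by the $a_{n_k}$ on the optional and predictable $\sigma$-fields, which are countably generated modulo null sets of a suitable dominating measure), rather than the separability of $\mathcal{R}^q$, which fails.
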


\end{document}